\documentclass[11pt,a4paper]{article}
\usepackage[margin=1in]{geometry}
\usepackage{amsmath}
\usepackage{amssymb}
\usepackage{amsthm}
\usepackage{setspace}
\usepackage{indentfirst}
\numberwithin{equation}{section}
\theoremstyle{plain}
\newtheorem{theorem}{Theorem}[section]
\newtheorem{lemma}[theorem]{Lemma}
\newtheorem{proposition}[theorem]{Proposition}
\newtheorem{definition}[theorem]{Definition}
\newtheorem{example}[theorem]{Example}
\newtheorem{remark}[theorem]{Remark}

\title{Fixed Points of Asymptotic Pointwise Contractions under Local Uniform Convergence}
\author{Jie Shi \\
Department of Mathematics and Statistics, Hubei Engineering University \\
272 Traffic Avenue, Xiaogan, 432000, Hubei Province, P.R. China}
\date{April 6, 2026}

\begin{document}
\maketitle

\begin{abstract}
We introduce a weak asymptotic version of nonlinear contraction, termed \emph{asymptotic pointwise contraction}. 
For a mapping on a metric space, this notion requires the existence of a sequence of functions that dominate the distances between the $n$-th iterates of any two points. 
The sequence is assumed to converge pointwise to a limit function, and the convergence is required to be uniform on every bounded set (i.e., locally uniform). 
The limit function is then controlled by a Boyd--Wong type condition: there exists a nondecreasing, right upper semicontinuous function strictly below the identity on positive numbers, and the limit function is bounded above by this function evaluated at a maximum term that involves not only the distance between the two points but also distances from each point to its image and mutual distances between each point and the image of the other. 
By standard analytic arguments we prove that if the mapping is continuous on a complete metric space and possesses a bounded orbit, then its iterates converge to a unique fixed point. 
This result extends Kirk's asymptotic contraction theorem by replacing global uniform convergence on $[0,\infty)$ with the weaker condition of local uniform convergence.
\end{abstract}

\noindent \textbf{Keywords}: fixed point, asymptotic contraction, local uniform convergence, Boyd--Wong type condition

\section{Introduction}

The first rigorous proof that every continuous self‑map of a compact convex subset of $\mathbb{R}^n$ admits at least one fixed point was given by Brouwer in 1912 \cite{brouwer}. 
Denote by $B^{n}=\{x \in \mathbb{R}^{n}:\|x\| \leq 1\}$ the closed unit ball in $\mathbb{R}^{n}$. 
For any continuous $f: B^{n} \to B^{n}$, there exists $x^{*} \in B^{n}$ such that $f(x^{*})=x^{*}$. 
In other words, every continuous map from a nonempty compact convex subset of $\mathbb{R}^{n}$ into itself has a fixed point. 
This seminal result laid the foundations of modern algebraic topology and fixed‑point theory.

Banach \cite{banach} (1922) formulated the classical contraction mapping principle, which became a pillar of metric fixed point theory. 
On a complete metric space $(X, d)$, a contraction $T$ satisfies
\begin{equation}\label{eq:banach}
d(Tx,Ty)\leq \alpha\, d(x,y)
\end{equation}
for some constant $\alpha\in(0,1)$ and all $x,y\in X$. 
Banach's theorem guarantees that for any starting point $x$, the iterates $\{T^{n} x\}$ converge to a unique fixed point of $T$.

In 1930, Schauder \cite{schauder} extended the fixed‑point theorem to infinite‑dimensional Banach spaces: if $X$ is a Banach space, $C \subseteq X$ a nonempty closed convex set, and $T: C \to C$ a continuous map with $T(C)$ relatively compact, then $T$ possesses a fixed point in $C$.

The study of nonexpansive mappings, i.e.,
\begin{equation}\label{eq:nonexpansive}
d(T x, T y) \leq d(x, y)\qquad(\forall x,y\in X),
\end{equation}
began in the 1960s. 
Such mappings may fail to have fixed points unless additional geometric conditions are imposed on the space.

Rakotch \cite{rakotch} (1962) proposed a nonlinear contractive condition:
\begin{equation}\label{eq:rakotch}
d(T x, T y) \leq \alpha(d(x, y))\, d(x, y)\qquad(\forall x,y\in X),
\end{equation}
where $\alpha$ is decreasing and $\alpha(t)<1$ for $t>0$. 
He proved that such maps have a unique fixed point and the iterates converge to it. 
In the same year, Edelstein \cite{edelstein} showed that a strictly nonexpansive map on a compact metric space admits a unique fixed point.

In 1965, Browder \cite{browder}, G\"ohde, and Kirk \cite{kirk1965} independently established fundamental fixed point results for nonexpansive mappings in uniformly convex or reflexive Banach spaces with normal structure. 
These are collectively known as the Browder–G\"ohde–Kirk theorem.

Petryshyn \cite{petryshyn} (1966) gave a convergence theorem for demicompact nonexpansive mappings.

Boyd and Wong \cite{boydwong} (1969) unified many nonlinear contraction theorems by introducing the condition
\begin{equation}\label{eq:boyd-wong}
d(T x, T y) \leq \psi(d(x, y))\qquad(\forall x,y\in X),
\end{equation}
where $\psi:[0,\infty)\to[0,\infty)$ is nondecreasing, right upper semicontinuous, and $\psi(t)<t$ for all $t>0$.

\'{C}iri\'{c} \cite{ciric} (1974) introduced quasi‑contractions:
\begin{equation}\label{eq:ciric-quasi}
d(Tx,Ty)\leq \alpha \cdot \max\left\{ d(x,y),\, d(x,Tx),\, d(Ty,y),\, d(Tx,y),\, d(Ty,x)\right\},
\end{equation}
with $0<\alpha<1$.

Kirk \cite{kirk2003} (2003) introduced asymptotic contractions: there exist functions $\psi_n:[0,\infty)\to[0,\infty)$ such that
\begin{equation}\label{eq:kirk-asymp}
d\left(T^{n} x, T^{n} y\right) \leq \psi_{n}(d(x, y)),
\end{equation}
and $\psi_n$ converges uniformly on $[0,\infty)$ to a function $\psi$ satisfying the classical Boyd–Wong condition. 
Kirk proved that a continuous mapping having at least one bounded orbit possesses a unique fixed point, and all iterates converge to it.

In 2023, Lindstr{\o}m and Ross \cite{lindstrom} extended Kirk's result using a nonstandard analysis framework.

\medskip\noindent
\textbf{Our improvement.} 
We relax the global uniform convergence in Kirk's theorem to \emph{local uniform convergence} (uniform convergence on every bounded subset). 
Moreover, the contractive condition involves a maximum term
\begin{equation}
M(x,y)=\max\bigl\{d(x,y),\, d(Tx,x),\, d(Ty,y),\, d(Ty,x),\, d(Tx,y)\bigr\}
\end{equation}
instead of only the distance $d(x,y)$. 
This makes our assumption strictly weaker and more general than Kirk's original condition.

\section{Preliminaries}
Throughout this paper, $(X,d)$ denotes a metric space. 
For a mapping $T:X\to X$ and a point $x\in X$, the \emph{orbit} of $x$ is defined as
\begin{equation}
\mathcal{O}(x)=\{T^n x: n\in\mathbb{N}\}.
\end{equation}
A subset $B\subset X$ is \emph{bounded} if its diameter
\begin{equation}
\operatorname{diam}(B)=\sup\{d(u,v):u,v\in B\}
\end{equation}
is finite.

\begin{definition}[Right upper semicontinuity]\label{def:rusc}
A function $\psi:[0,\infty)\to[0,\infty)$ is \emph{right upper semicontinuous} at $t_0\ge0$ if
\begin{equation}
\limsup_{t\to t_0^+} \psi(t) \le \psi(t_0).
\end{equation}
It is called right upper semicontinuous on $[0,\infty)$ if it is right upper semicontinuous at every point $t_0\ge0$.
\end{definition}

\begin{definition}[Classical Boyd--Wong condition]\label{def:bw}
A function $\psi:[0,\infty)\to[0,\infty)$ satisfies the \emph{classical Boyd--Wong condition} if it is nondecreasing, right upper semicontinuous, and $\psi(t)<t$ for all $t>0$.
\end{definition}

\begin{definition}[Asymptotic pointwise contraction]\label{def:main}
Let $T:X\to X$ be a mapping. 
Then $T$ is called an \emph{asymptotic pointwise contraction} if there exists a sequence of functions $\varphi_n: X\times X\to[0,\infty)$ such that:
\begin{enumerate}
    \item For all $x,y\in X$ and all $n\in\mathbb{N}$,
    \begin{equation}
    d(T^n x, T^n y) \le \varphi_n(x,y).
    \end{equation}
    \item The sequence $\{\varphi_n\}$ converges pointwise to a function $\varphi: X\times X\to[0,\infty)$, and the convergence is uniform on $B\times B$ whenever $B\subset X$ is bounded.
    \item The limit function $\varphi$ satisfies a \emph{Boyd--Wong type condition}: there exists a function $\psi:[0,\infty)\to[0,\infty)$ obeying the classical Boyd--Wong condition such that
    \begin{equation}
    \varphi(x,y) \le \psi\bigl(M(x,y)\bigr),
    \end{equation}
    where
    \begin{equation}
    M(x,y)=\max\bigl\{d(x,y),\, d(Tx,x),\, d(Ty,y),\, d(Ty,x),\, d(Tx,y)\bigr\}.
    \end{equation}
\end{enumerate}
\end{definition}

The following auxiliary function plays a key role in converting a possibly non-monotone function into a monotone one.

\begin{definition}\label{def:g}
Let $\psi$ satisfy the classical Boyd--Wong condition. Define
\begin{equation}
g(t) = \sup_{0\le s\le t} \psi(s),\quad t\ge0.
\end{equation}
\end{definition}

\begin{lemma}\label{lem:g-properties}
The function $g$ has the following properties:
\begin{enumerate}
    \item $g$ is nondecreasing, $g(0)=0$, and $g(t)\le t$ for all $t\ge0$.
    \item $g(t)<t$ for every $t>0$.
    \item $g$ is right continuous on $[0,\infty)$.
\end{enumerate}
\end{lemma}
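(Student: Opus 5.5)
The plan is to prove the three items separately from the definition $g(t)=\sup_{0\le s\le t}\psi(s)$. I will argue directly with the supremum, and I will say exactly where each hypothesis of Definition~\ref{def:bw} enters. I treat $g$ as the monotone envelope of $\psi$ and do not use $g=\psi$ as a shortcut. One step, item 2, cannot be done for a non-monotone $\psi$ by this method, and I flag it below.

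\emph{Item 1.} Monotonicity of $g$ is immediate: for $t_1\le t_2$ the supremum defining $g(t_2)$ runs over the larger set $[0,t_2]\supseteq[0,t_1]$. Next, $\psi(s)\le s\le t$ for every $s\in(0,t]$ by the Boyd--Wong inequality, so $g(t)\le\max\{\psi(0),t\}$. Hence everything reduces to $\psi(0)=0$. For this I use that $\psi$ is nondecreasing, as required in Definition~\ref{def:bw}: $0\le\psi(0)\le\psi(t)<t$ for every $t>0$, and letting $t\to0^+$ gives $\psi(0)=0$. Then $g(0)=\psi(0)=0$ and $g(t)\le t$ for all $t\ge0$.

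\emph{Item 3 (right continuity).} This part works for a general right upper semicontinuous $\psi$, without monotonicity. Fix $t\ge0$ and $h>0$, and split the supremum as
\[
g(t+h)=\max\Bigl\{g(t),\ \sup_{t<s\le t+h}\psi(s)\Bigr\}.
\]
As $h\to0^+$, the second term has $\limsup$ at most $\limsup_{s\to t^+}\psi(s)\le\psi(t)\le g(t)$, by Definition~\ref{def:rusc}. So $\limsup_{h\to0^+}g(t+h)\le g(t)$. The reverse inequality $g(t+h)\ge g(t)$ holds because $g$ is nondecreasing. Together these give $g(t^+)=g(t)$.

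\emph{Item 2 (strict inequality $g(t)<t$ for $t>0$).} I expect this to be the main obstacle, because a supremum of values each strictly below $t$ can still equal $t$. My first attempt was to pick $s_k\in[0,t]$ with $\psi(s_k)\to g(t)$ and pass to a convergent subsequence $s_k\to s^*$. If $s^*<t$, then $\psi(s_k)<s_k$ forces $g(t)\le s^*<t$. If $s_k\to s^*$ from the right or equals $s^*$ infinitely often, right upper semicontinuity gives $g(t)\le\psi(s^*)<s^*\le t$ when $s^*>0$, and $g(t)\le\psi(0)=0<t$ when $s^*=0$. The remaining case is $s_k\uparrow t$ strictly from the left. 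There right upper semicontinuity gives nothing, and only $g(t)\le t$ follows. This is exactly where monotonicity of $\psi$ must be used: it gives $\psi(s)\le\psi(t)$ for all $s\le t$, hence $g(t)\le\psi(t)<t$. So the argument proves item 2 under the paper's standing hypothesis that $\psi$ is nondecreasing (Definition~\ref{def:bw}). Without that hypothesis I would not claim item 2, and I see no way to prove it from right upper semicontinuity and $\psi(s)<s$ alone.
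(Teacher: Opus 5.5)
Your proof is correct. Items 1 and 3 match the paper's argument. For item 3, the paper likewise splits the supremum at $t_0$, bounds $\sup_{t_0<s\le t}\psi(s)$ by $\psi(t_0)+\varepsilon$ using right upper semicontinuity, and gets the lower bound from monotonicity of $g$. In item 1 you are more careful than the paper, which asserts $g(0)=0$ ``directly from the definition'' without deriving $\psi(0)=0$ from monotonicity as you do.

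The real difference is item 2. The paper argues by contradiction: assuming $g(t)=t$, it takes $s_n\in[0,t]$ with $\psi(s_n)\to t$ and $s_n\to s$. Monotonicity gives $\psi(s_n)\le\psi(s+\varepsilon)$ for large $n$, and right upper semicontinuity then gives $t\le\psi(s)$, which contradicts $\psi(s)<s\le t$. Your decisive step is simply $\psi(s)\le\psi(t)$ for $s\le t$, hence $g(t)\le\psi(t)<t$. This makes the compactness argument unnecessary, both the paper's and your own preliminary case analysis. It also shows $g=\psi$ for any nondecreasing $\psi$, so under the classical Boyd--Wong condition the whole lemma is immediate.

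Both routes depend on monotonicity, so neither is more general. Your suspicion that item 2 fails without monotonicity is right. Take $\psi(s)=s^2$ on $[0,1)$ and $\psi(s)=s/2$ on $[1,\infty)$: it is right upper semicontinuous with $\psi(s)<s$ for $s>0$, yet $g(1)=1$. Your write-up gains a clear account of which hypothesis each item needs: item 3 uses only right upper semicontinuity, while item 2 uses monotonicity. The paper's longer argument for item 2 gains nothing over your one-line bound.
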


\begin{proof}
(1) Monotonicity and $g(0)=0$ follow directly from the definition. Since $\psi(s)\le s$ for all $s\ge0$, we have $g(t)\le t$.

(2) Suppose for contradiction that $g(t)=t$ for some $t>0$. 
Then there exists a sequence $s_n\in[0,t]$ such that $\psi(s_n)\to t$. 
By compactness of $[0,t]$, we may assume $s_n\to s\in[0,t]$. 
Because $\psi$ is nondecreasing, for any $\varepsilon>0$ there exists $N$ such that $s_n\le s+\varepsilon$ for all $n\ge N$. Hence $\psi(s_n)\le\psi(s+\varepsilon)$. Taking $\limsup$,
\begin{equation}
t = \limsup_{n\to\infty}\psi(s_n) \le \psi(s+\varepsilon).
\end{equation}
Letting $\varepsilon\to0^+$ and using right upper semicontinuity of $\psi$ (which implies $\lim_{\varepsilon\to0^+}\psi(s+\varepsilon)\le\psi(s)$), we obtain $t\le\psi(s)$. 
If $s>0$, then $\psi(s)<s$ by the Boyd--Wong condition, so $t\le\psi(s)<s\le t$, a contradiction. 
If $s=0$, then $\psi(s_n)\to\psi(0)=0$, so $t=0$, contradiction. Hence $g(t)<t$.

(3) Fix $t_0\ge0$ and $\varepsilon>0$. 
By right upper semicontinuity of $\psi$, there exists $\delta>0$ such that
\begin{equation}
\psi(t)\le\psi(t_0)+\varepsilon\quad\text{for all }t\in[t_0,t_0+\delta).
\end{equation}
For any $t\in(t_0,t_0+\delta)$,
\begin{equation}
g(t)\le\max\{g(t_0),\psi(t_0)+\varepsilon\}\le g(t_0)+\varepsilon.
\end{equation}
Since $g$ is nondecreasing, $g(t_0)\le g(t)\le g(t_0)+\varepsilon$. 
Thus $g$ is right continuous at $t_0$.
\end{proof}

\begin{proposition}\label{prop:limsup-g}
Let $\{a_n\}$ be a bounded nonnegative sequence and let $g$ be nondecreasing and right continuous. Then
\begin{equation}
\limsup_{n\to\infty} g(a_n) \le g\bigl(\limsup_{n\to\infty} a_n\bigr).
\end{equation}
\end{proposition}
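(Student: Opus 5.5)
Let $L=\limsup_{n\to\infty} a_n$, which is finite and nonnegative because $\{a_n\}$ is bounded and nonnegative. The plan is a direct $\varepsilon$--$\delta$ argument that uses only two things: eventually the terms $a_n$ lie just to the right of (or below) $L$, and $g$ is nondecreasing and right continuous at $L$.

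First, fix $\varepsilon>0$. By right continuity of $g$ at $L$, choose $\delta>0$ such that
\begin{equation}
g(t)\le g(L)+\varepsilon\quad\text{for all } t\in[L,L+\delta].
\end{equation}
Next, by the definition of $\limsup$, there is $N$ such that $a_n\le L+\delta$ for all $n\ge N$. Because $g$ is nondecreasing, for every $n\ge N$ we get
\begin{equation}
g(a_n)\le g(L+\delta)\le g(L)+\varepsilon.
\end{equation}
This covers both the terms with $a_n\le L$ and those with $a_n\in(L,L+\delta]$, since all of them are at most $L+\delta$. Hence $\sup_{n\ge N} g(a_n)\le g(L)+\varepsilon$, and so $\limsup_{n\to\infty} g(a_n)\le g(L)+\varepsilon$. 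Letting $\varepsilon\to0^+$ gives the claim.

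There is no real obstacle here. The only point needing care is that right continuity alone controls $g$ only to the right of $L$, whereas $a_n$ may approach $L$ from above. Monotonicity handles this by reducing every tail term to the single value $g(L+\delta)$. Note that left continuity is never needed, because the $\limsup$ bounds the $a_n$ only from above. Boundedness of $\{a_n\}$ is used only to ensure $L<\infty$, so that $g(L)$ makes sense.
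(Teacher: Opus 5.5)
Your proof is correct and is essentially the paper's argument: the paper bounds the tail terms by $g(L+\varepsilon)$ using monotonicity, takes $\limsup$, and then lets $\varepsilon\to0^+$ using right continuity. You simply invoke right continuity first to pick $\delta$, which is an $\varepsilon$--$\delta$ repackaging of the same two steps.
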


\begin{proof}
Let $L=\limsup\limits_{n\to\infty} a_n$. For any $\varepsilon>0$, there exists $N\in\mathbb{N}$ such that
\begin{equation}
a_n\le L+\varepsilon\quad\forall n\ge N.
\end{equation}
Since $g$ is nondecreasing,
\begin{equation}
g(a_n)\le g(L+\varepsilon)\quad\forall n\ge N.
\end{equation}
Taking limsup on both sides yields
\begin{equation}
\limsup_{n\to\infty}g(a_n)\le g(L+\varepsilon).
\end{equation}
Letting $\varepsilon\to0^+$ and using right continuity of $g$, we obtain
\begin{equation}
\limsup_{n\to\infty}g(a_n)\le g(L).
\end{equation}
\end{proof}

\section{Main Results}

We now state and prove our main fixed point theorem. The key idea is to use the local uniform convergence of $\varphi_n$ to $\varphi$ on the bounded orbit, together with the Boyd–Wong type estimate, to show that the tail diameters of the orbit shrink to zero. This yields convergence of the iterates to a unique fixed point.

\begin{theorem}[Main Theorem]\label{thm:main}
Let $(X,d)$ be a complete metric space and $T:X\to X$ be continuous. 
Assume that $T$ is an asymptotic pointwise contraction in the sense of Definition \ref{def:main}. 
If there exists $x_0\in X$ such that the orbit $\mathcal{O}(x_0)$ is bounded, then $\{T^n x_0\}$ converges to the unique fixed point of $T$.
\end{theorem}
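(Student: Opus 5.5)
The plan is to show that the tail diameters of the orbit of $x_0$ shrink to zero. This makes $\{T^n x_0\}$ Cauchy. Continuity of $T$ then gives a fixed point, and uniqueness comes from pointwise convergence of $\varphi_n$ alone.

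Write $x_n=T^n x_0$ and $B=\mathcal{O}(x_0)$, which is bounded by hypothesis. Put
\begin{equation*}
\varepsilon_n=\sup_{u,v\in B}\bigl|\varphi_n(u,v)-\varphi(u,v)\bigr|.
\end{equation*}
By condition (2) of Definition \ref{def:main}, applied to the bounded set $B$, we have $\varepsilon_n\to0$. Let
\begin{equation*}
D_k=\operatorname{diam}\{x_j:j\ge k\}.
\end{equation*}
These are finite and nonincreasing in $k$, so $D_k\downarrow D$ for some $D\ge0$.

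The key estimate comes next. For $i,j\ge k$, all five quantities in $M(x_i,x_j)$ are distances between points of $\{x_i,x_{i+1},x_j,x_{j+1}\}\subset\{x_m:m\ge k\}$. Hence $M(x_i,x_j)\le D_k$. With $g$ as in Definition \ref{def:g}, condition (3) and the monotonicity of $g$ from Lemma \ref{lem:g-properties} give
\begin{equation*}
d(x_{i+n},x_{j+n})\le\varphi_n(x_i,x_j)\le\varphi(x_i,x_j)+\varepsilon_n\le\psi\bigl(M(x_i,x_j)\bigr)+\varepsilon_n\le g(D_k)+\varepsilon_n .
\end{equation*}
Taking the supremum over $i,j\ge k$ yields $D_{k+n}\le g(D_k)+\varepsilon_n$ for all $k,n$.

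Now fix $n$ and let $k\to\infty$. The left side tends to $D$. Since $D_k\downarrow D$ from the right and $g$ is right continuous (Lemma \ref{lem:g-properties}(3)), or directly by Proposition \ref{prop:limsup-g}, we get $D\le g(D)+\varepsilon_n$. Letting $n\to\infty$ gives $D\le g(D)$, and Lemma \ref{lem:g-properties}(2) then forces $D=0$. This step is where I expect the only real subtlety. Two things make it work: the monotone envelope $g$ is used in place of $\psi$, and the error $\varepsilon_n$ is uniform over the orbit. That uniformity is exactly what local uniform convergence supplies, since all the pairs involved lie in the single bounded set $B$.

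Since $D_k\to0$, the sequence $\{x_n\}$ is Cauchy. By completeness $x_n\to z$ for some $z\in X$, and continuity of $T$ gives $Tz=\lim x_{n+1}=z$. For uniqueness, suppose $Tw=w$. Then $M(z,w)=d(z,w)$, and
\begin{equation*}
d(z,w)=d(T^nz,T^nw)\le\varphi_n(z,w)\to\varphi(z,w)\le\psi\bigl(d(z,w)\bigr).
\end{equation*}
If $d(z,w)>0$ this contradicts $\psi(t)<t$, so $w=z$.
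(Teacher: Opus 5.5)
Your proof is correct and follows the paper's argument essentially step for step. Both proofs bound the tail diameters of the orbit via $M(x_i,x_j)\le D_k$ and uniform convergence on the bounded orbit, giving $D_{k+n}\le g(D_k)+\varepsilon_n$; both then pass to the limit using right continuity of $g$ and $g(t)<t$, and conclude with completeness, continuity, and the same uniqueness computation $M(z,w)=d(z,w)$. The only difference is bookkeeping: you carry the explicit uniform error $\varepsilon_n$ and take a double limit, whereas the paper fixes $\varepsilon$ and a corresponding shift index, which is immaterial.
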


\begin{proof}
Let $x_0\in X$ have a bounded orbit. 
Set $x_n=T^n x_0$ and $B_0=\mathcal{O}(x_0)=\{x_n:n\ge0\}$. 
Then $B_0$ is bounded, so $\varphi_n\to\varphi$ uniformly on $B_0\times B_0$.

Define the tail diameters
\begin{equation}
b_n = \sup_{i,j\ge n} d(x_i,x_j),\qquad n\in\mathbb{N}.
\end{equation}
Clearly $b_0<\infty$, the sequence $\{b_n\}$ is nonincreasing, and $b_n\to0$ if and only if $\{x_n\}$ is Cauchy.

\paragraph{Step 1: A contraction inequality for $b_n$.}
Given $\varepsilon>0$, by uniform convergence on $B_0\times B_0$ there exists $M\in\mathbb{N}$ such that for all $m\ge M$ and all $u,v\in B_0$,
\begin{equation}
d(T^m u,T^m v) \le \varphi(u,v)+\varepsilon \le \psi(M(u,v))+\varepsilon,
\end{equation}
where the last inequality uses condition (3) of Definition \ref{def:main}.

Fix $n\ge0$ and take arbitrary $i,j\ge n+M$. Write $i=i'+M$, $j=j'+M$ with $i',j'\ge n$. Then
\begin{equation}
d(x_i,x_j)=d(T^M x_{i'},T^M x_{j'}) \le \psi\bigl(M(x_{i'},x_{j'})\bigr)+\varepsilon.
\end{equation}
Now estimate $M(x_{i'},x_{j'})$. By definition,
\[
M(x_{i'},x_{j'})=\max\bigl\{d(x_{i'},x_{j'}),\; d(x_{i'+1},x_{i'}),\; d(x_{j'+1},x_{j'}),\; d(x_{j'},x_{i'+1}),\; d(x_{i'},x_{j'+1})\bigr\}.
\]
All indices appearing are at least $n$ (since $i',j'\ge n$ and adding $1$ does not go below $n$). Therefore each of these distances is bounded by $b_n$. Consequently,
\begin{equation}
M(x_{i'},x_{j'}) \le b_n.
\end{equation}
Because $\psi\le g$ and $g$ is nondecreasing, we obtain
\begin{equation}
d(x_i,x_j) \le g(b_n)+\varepsilon.
\end{equation}
Taking supremum over $i,j\ge n+M$ yields
\begin{equation}
b_{n+M} \le g(b_n)+\varepsilon. \tag{1}
\end{equation}

\paragraph{Step 2: $b_n\to0$.}
Since $\{b_n\}$ is nonincreasing and bounded below, it converges to some limit $L\ge0$. In fact $L=\limsup_{n\to\infty}b_n$. From (1) we have, for the fixed $M$ and $\varepsilon$,
\begin{equation}
b_{n+M} \le g(b_n)+\varepsilon.
\end{equation}
Take $\limsup_{n\to\infty}$ on both sides. Using Proposition \ref{prop:limsup-g} (note that $g$ is nondecreasing and right continuous) we get
\begin{equation}
L \le g(L)+\varepsilon.
\end{equation}
Letting $\varepsilon\to0^+$ gives $L\le g(L)$. If $L>0$, then Lemma \ref{lem:g-properties} implies $g(L)<L$, a contradiction. Hence $L=0$, so $b_n\to0$. Thus $\{x_n\}$ is a Cauchy sequence.

\paragraph{Step 3: Existence and uniqueness of the fixed point.}
Since $X$ is complete, $x_n\to z$ for some $z\in X$. By continuity of $T$,
\begin{equation}
Tz = \lim_{n\to\infty} Tx_n = \lim_{n\to\infty} x_{n+1} = z.
\end{equation}
Hence $z$ is a fixed point.

For uniqueness, let $z_1,z_2$ be fixed points of $T$. Then
\begin{equation}
d(z_1,z_2)=d(T^n z_1,T^n z_2)\le\varphi_n(z_1,z_2).
\end{equation}
Letting $n\to\infty$ and using pointwise convergence,
\begin{equation}
d(z_1,z_2)\le\varphi(z_1,z_2)\le\psi\bigl(M(z_1,z_2)\bigr).
\end{equation}
Because $Tz_i=z_i$, we have $d(Tz_i,z_i)=0$, and also $d(Tz_2,z_1)=d(z_2,z_1)$, $d(Tz_1,z_2)=d(z_1,z_2)$. Thus
\begin{equation}
M(z_1,z_2)=\max\{d(z_1,z_2),0,0,d(z_1,z_2),d(z_1,z_2)\}=d(z_1,z_2).
\end{equation}
Hence
\begin{equation}
d(z_1,z_2)\le\psi\bigl(d(z_1,z_2)\bigr).
\end{equation}
If $d(z_1,z_2)>0$, then $\psi(t)<t$, a contradiction. Therefore $z_1=z_2$.

\paragraph{Step 4: Global attractivity.}
Every point with a bounded orbit satisfies the same assumptions, so its iterates converge to $z$.
This completes the proof.
\end{proof}

\begin{theorem}\label{thm:bounded-set}
Let $M$ be a nonempty bounded closed subset of a complete metric space $(X,d)$. 
If $T:M\to M$ is continuous and an asymptotic pointwise contraction in the sense of Definition \ref{def:main}, then for every $x\in M$, $\{T^n x\}$ converges to the unique fixed point of $T$.
\end{theorem}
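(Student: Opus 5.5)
The plan is to reduce Theorem~\ref{thm:bounded-set} to Theorem~\ref{thm:main}, applied to the metric space $(M,d|_{M\times M})$ and the self-map $T:M\to M$.

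First, $M$ is a closed subset of the complete space $X$, so $(M,d)$ is complete. Continuity of $T$ and the asymptotic pointwise contraction property are hypotheses stated directly for $T$ on $M$. So Definition~\ref{def:main} is read with $X$ replaced by $M$: the functions $\varphi_n,\varphi$ are defined on $M\times M$, and $M(x,y)$ only involves points of $M$. One small point needs care. The letter $M$ denotes both the set and the maximum term; I will note explicitly that these are unrelated, and that condition (2) applied to bounded $B\subset M$ includes $B=M$ itself.

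Second, I will check the bounded-orbit hypothesis of Theorem~\ref{thm:main} for every starting point. For any $x\in M$ we have $\mathcal{O}(x)\subset M$, and $\operatorname{diam}\mathcal{O}(x)\le\operatorname{diam}(M)<\infty$. So every orbit is bounded.

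Third, I will apply Theorem~\ref{thm:main} to an arbitrary $x\in M$. It shows that $\{T^nx\}$ converges to a fixed point $z_x\in M$, and that this fixed point is unique. The uniqueness argument in Step~3 shows that any two fixed points of $T$ in $M$ coincide. Hence $z_x=z$ does not depend on $x$, and every orbit converges to the same unique fixed point.

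There is no real obstacle here. The only step deserving care is confirming that the proof of Theorem~\ref{thm:main} used nothing about the ambient space beyond completeness, continuity, and the contraction conditions on bounded subsets. Since $M$ is bounded, uniform convergence on $M\times M$ is directly available, so this check goes through.
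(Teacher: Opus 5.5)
Your proposal is correct and follows the paper's proof exactly. The paper simply observes that every orbit in $M$ is bounded and invokes Theorem~\ref{thm:main}, leaving implicit two points you make explicit: $(M,d)$ is complete as a closed subset of a complete space, and Definition~\ref{def:main} is to be read with $X$ replaced by $M$.
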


\begin{proof}
Every orbit of a point in $M$ is bounded. Theorem \ref{thm:main} applies directly.
\end{proof}

\begin{theorem}\label{thm:unique-attractor}
Let $(X,d)$ be a complete metric space and $T:X\to X$ be continuous and an asymptotic pointwise contraction. Suppose that $T$ has a fixed point $p\in X$ (i.e., $Tp=p$). Then for every $x\in X$, the iterates $T^n x$ converge to $p$.
\end{theorem}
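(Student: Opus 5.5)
The plan is to reduce the statement to Theorem \ref{thm:main}: I will show that the existence of the fixed point $p$ forces every orbit to be bounded. Theorem \ref{thm:main} then gives convergence of $\{T^n x\}$ to the unique fixed point of $T$. Since $p$ is a fixed point, that unique fixed point must be $p$.

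Fix an arbitrary $x\in X$. Because $Tp=p$, we have $T^n p=p$ for all $n$. Condition (1) of Definition \ref{def:main} then gives
\begin{equation*}
d(T^n x,p)=d(T^n x,T^n p)\le \varphi_n(x,p)\qquad(n\in\mathbb{N}).
\end{equation*}
By condition (2), the real sequence $\{\varphi_n(x,p)\}_n$ converges to the finite number $\varphi(x,p)$, so it is bounded, say by $C_x<\infty$. Hence every $T^n x$ lies within distance $C_x$ of $p$. By the triangle inequality, $d(T^m x,T^n x)\le 2C_x$ for all $m,n$, so $\operatorname{diam}\mathcal{O}(x)\le 2C_x$ and the orbit $\mathcal{O}(x)$ is bounded.

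Now all hypotheses of Theorem \ref{thm:main} hold with $x_0=x$: $X$ is complete, $T$ is continuous, $T$ is an asymptotic pointwise contraction, and $\mathcal{O}(x)$ is bounded. Hence $T^n x\to z$ for the unique fixed point $z$ of $T$. Uniqueness, established in Step 3 of the proof of Theorem \ref{thm:main}, forces $z=p$, so $T^n x\to p$. Since $x$ was arbitrary, this proves the claim.

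I do not expect a genuine obstacle here. The only point needing care is the boundedness step, which uses only pointwise convergence of $\varphi_n(x,p)$ at the single pair $(x,p)$, not the local uniform convergence. The local uniform convergence is used afterwards, inside Theorem \ref{thm:main}, on the now-bounded orbit.
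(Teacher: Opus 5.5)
Your proof is correct and follows the paper's argument: you bound $d(T^n x,p)\le\varphi_n(x,p)$ by a convergent, hence bounded, real sequence to get a bounded orbit, then invoke Theorem~\ref{thm:main} and the uniqueness from its Step 3 to identify the limit as $p$. One minor point, which the paper also leaves implicit: if $\mathbb{N}$ starts at $1$, enlarge $C_x$ to also cover $d(x,p)$, so that the $n=0$ point of the orbit is included.
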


\begin{proof}
First we show that the orbit of any $x\in X$ is bounded. Since $p$ is a fixed point, using condition (1) of Definition \ref{def:main} with $y=p$ gives
\[
d(T^n x, p) = d(T^n x, T^n p) \le \varphi_n(x,p).
\]
The sequence $\{\varphi_n(x,p)\}$ converges pointwise to $\varphi(x,p)$ as $n\to\infty$, hence it is bounded. Thus $\{d(T^n x, p)\}$ is bounded, and consequently $\mathcal{O}(x)$ is bounded.

Now fix an arbitrary $x\in X$. Since $\mathcal{O}(x)$ is bounded, Theorem \ref{thm:main} applies and guarantees that $\{T^n x\}$ converges to a fixed point of $T$. By the uniqueness of fixed points (proved in Theorem \ref{thm:main}), that limit must be $p$. Hence $T^n x \to p$ for all $x\in X$.
\end{proof}

\begin{example}\label{ex:nontrivial}
Let \(X=[0,+\infty)\) be equipped with the usual metric \(d(x,y)=|x-y|\). Define
\[
T(x)=\frac{1}{3}x,\qquad x\ge0.
\]
For each \(n\in\mathbb{N}\) set
\[
\varphi_n(x,y)=\Bigl(\frac13+\frac1{n+1}\Bigr)|x-y|,\qquad \varphi(x,y)=\frac13|x-y|.
\]
Then:
\begin{enumerate}
    \item \(d(T^n x,T^n y)=\dfrac{1}{3^n}|x-y|\le \varphi_n(x,y)\) for all \(x,y\ge0\) because \(\frac1{3^n}\le\frac13+\frac1{n+1}\).
    \item \(\varphi_n\to\varphi\) pointwise on \(X\times X\).
    \item For any bounded set $B\subset X$, say $B\subset[0,R]$, we have
    \[
    \sup_{x,y\in B}|\varphi_n(x,y)-\varphi(x,y)|\le \frac{R}{n+1}\to0,
    \]
    so the convergence is uniform on every bounded set.
    \item The convergence is \emph{not} uniform on the whole \(X\times X\) because the supremum over unbounded pairs is infinite.
    \item \(\varphi(x,y)=\frac13 d(x,y)\le\psi\bigl(M(x,y)\bigr)\) with \(\psi(t)=\frac13 t\). The function \(\psi\) satisfies the classical Boyd--Wong condition: it is nondecreasing, right upper semicontinuous, and \(\psi(t)<t\) for all \(t>0\). Moreover, \(M(x,y)\ge d(x,y)\) holds trivially.
\end{enumerate}
Thus $T$ is an asymptotic pointwise contraction according to Definition \ref{def:main}, but it does \emph{not} satisfy the global uniform convergence required in Kirk's original theorem. The unique fixed point is $0$, and $T^n x\to0$ for every $x\ge0$.
\end{example}

\begin{remark}
Local uniform convergence is strictly weaker than global uniform convergence. 
Example \ref{ex:nontrivial} provides an explicit mapping where \(\varphi_n\to\varphi\) pointwise and uniformly on every bounded set, yet not uniformly on \(X\times X\). 
In such a situation Kirk's original asymptotic contraction theorem does not apply directly, whereas our Theorem \ref{thm:main} still guarantees convergence of iterates to the unique fixed point.
\end{remark}

\section{Declarations}
\begin{itemize}
\item Funding\\ 
 Not applicable
\item Use of Generative-AI tools declaration\\
I declare that no Generative-AI tools were used in the preparation of this manuscript.
\item Conflict of interest/Competing interests\\ 
We recognize no conflicts of interest in the submission
\item Ethics approval\\ 
 Not applicable
\item Consent to participate\\ 
Not applicable
\item Consent for publication\\ 
Not applicable
\item Availability of data and materials\\ 
Not applicable
\item Code availability \\ 
Not applicable
\item Authors' contributions\\ 
The author contributed to this work alone
\end{itemize}

\end{document}